\title{Star chromatic index}
\author{Zden\v{e}k Dvo\v{r}\'ak 
\thanks{
  Email: {\tt rakdver@kam.mff.cuni.cz}. 
  Institute for Theoretical Computer Science is supported as project 1M0545 by Ministry of Education of the Czech Republic. 
  Partially supported by a Czech-Slovenian bilateral project MEB 091037 and BI-CZ/10-11-004.
  } 
\\[2mm] 
  KAM \& ITI \\ 
  Charles University \\ 
  Prague 
\and
  Bojan Mohar
  \thanks{
  Email: {\tt mohar@sfu.ca}. 
  Supported in part by an NSERC Discovery Grant (Canada),
  by the Canada Research Chair program, and by the
  Research Grant P1--0297 of ARRS (Slovenia).}~\thanks{On leave from:
  IMFM \& FMF, Department of Mathematics, University of Ljubljana, Ljubljana,
  Slovenia.}
\\[2mm] 
  Dept.~of Mathematics \\
  Simon Fraser University \\
  Burnaby
\and
  Robert \v{S}\'{a}mal\thanks{
  Email: {\tt samal@kam.mff.cuni.cz}. 
  Institute for Theoretical Computer Science is supported as project 1M0545 by Ministry of Education of the Czech Republic. 
  Partially supported by grant GA \v{C}R P201/10/P337.
  Partially supported by a Czech-Slovenian bilateral project MEB 091037 and BI-CZ/10-11-004.}
\\[2mm] 
  KAM \& ITI \\ 
  Charles University \\ 
  Prague 
}
\newtheorem{theorem}{Theorem}[section]
\newtheorem{lemma}[theorem]{Lemma}
\newtheorem{conjecture}{Conjecture}
\newtheorem{question}[conjecture]{Question}
\newcommand{\eopf}{\framebox[2.5mm]}
\newenvironment{proofof}[1]%
{\noindent{\bf Proof (#1).}\ }%
{\hfill\eopf\par\bigskip}%
\newenvironment{proof}%
{\noindent{\bf Proof.}\ }%
{\hfill\eopf\par\bigskip}%
\def\ceil#1{\lceil #1 \rceil}
\let\eps\varepsilon
\newcommand\chis{\chi_{\rm s}}
\begin{document}

\date{}
\maketitle

\begin{abstract}
The star chromatic index $\chis'(G)$ of a graph $G$ is the minimum number 
of colors needed to properly color the edges of the graph so that no path or cycle
of length four is bi-colored. We obtain a near-linear upper bound in terms 
of the maximum degree $\Delta=\Delta(G)$. Our best lower bound 
on $\chis'$ in terms of $\Delta$ is $2\Delta(1+o(1))$ valid for complete graphs. 
We also consider the special case of cubic graphs, 
for which we show that the star chromatic index lies between 4 and 7 and 
characterize the graphs attaining the lower bound. 
The proofs involve a variety of notions from other branches of mathematics
and may therefore be of certain independent interest.
\end{abstract}

\section{Motivation}

Edge-colorings of graphs have long tradition. Although the chromatic index of
a graph with maximum degree $\Delta$ is either equal to $\Delta$ or $\Delta+1$
(Vizing \cite{Vizing}), it is hard to decide when one or the other value
occurs. This is a consequence of the fact that distinguishing between graphs 
whose chromatic index is $\Delta$ or $\Delta+1$ is NP-hard (Holyer \cite{Holyer}).
This is true even for the special case when $\Delta=3$ (cubic and subcubic graphs).

Two special parameters concerning vertex colorings of graphs under some additional
constraints have received lots of attention. The first kind is that of an 
\emph{acyclic coloring} (see \cite{Gr,AMR}), where we ask not only that every color 
class is an independent vertex set but also that any two color classes induce an 
acyclic subgraph. The second kind is obtained when we request 
that any two color classes induce a star forest --- this variant is called 
\emph{star coloring} (see \cite{ACKKR,NO} for more details). These types of colorings 
give rise to the notions of the \emph{acyclic chromatic number} and the 
\emph{star chromatic number} of a graph, respectively. 

A \emph{proper $k$-edge-coloring} of a graph $G$ is a mapping
$\varphi: E(G)\to C$, where $C$ is a set (of \emph{colors}) of cardinality $k$,
and for any two adjacent edges $e,f$ of $G$, we have $\varphi(e)\ne \varphi(f)$.
A subgraph $F$ of $G$ is said to be \emph{bi-colored} (under the edge-coloring
$\varphi$) if $|\varphi(E(F))|\le 2$. 
A proper $k$-edge-coloring $\varphi$ is an 
\emph{acyclic $k$-edge-coloring} if there are no bi-colored cycles in $G$, and
is a \emph{star $k$-edge-coloring} if there are neither bi-colored 4-cycles
nor bi-colored paths of length 4 in $G$ (by length of a path we mean its
number of edges). 
The \emph{star chromatic index} of $G$,
denoted by $\chis'(G)$, is the smallest integer $k$ such that $G$ admits 
a star $k$-edge-coloring. 
Note that the above definition of acyclic/star edge-coloring of a graph $G$
is equivalent with acyclic/star vertex coloring of the line-graph $L(G)$.  

\newcommand\Gd{{\mathcal G}_\Delta}

If one considers the class of graphs $\Gd$ of maximum degree at most $\Delta$,
Brooks' Theorem shows that the usual chromatic number is $O(\Delta)$.
The maximum acyclic chromatic number on $\Gd$ is
$\Omega(\Delta^{4/3}/\log^{1/3}\Delta)$ and $O(\Delta^{4/3})$ 
(Alon, McDiarmid, and Reed \cite{AMR}). 
The maximum star chromatic number on $\Gd$ is 
$\Omega(\Delta^{3/2}/\log^{1/2}\Delta)$ and $O(\Delta^{3/2})$ 
(Fertin, Raspaud, and Reed \cite{FRR}). 

In contrast with the aforementioned $\Delta^{4/3}$ behaviour in the class of 
all graphs of maximum degree $\Delta$, the acyclic chromatic index is linear
in terms of the maximum degree. Alon et al.\ \cite{AMR} proved
that it is at most $64\Delta$, and Molloy and Reed \cite{MR} improved 
the upper bound to $16\Delta$. 
One would expect a similar phenomenon 
to hold for star edge-colorings. However, the only previous work \cite{Liu-Deng}
just improves the constant in the bound $O(\Delta^{3/2})$ from 
vertex coloring. 

In this paper we show a near-linear upper bound for the star chromatic index
in terms of the maximum degree (Theorem \ref{thm:upperdelta}).
Additionally, we provide some lower bounds (Theorem \ref{thm:lowerdelta})
and consider the special case of cubic graphs (Theorem \ref{thm:cubic}).
The proofs involve a variety of notions from other branches of mathematics
and are therefore of certain independent interest.

\section{Upper bound for $\chis'(K_n)$}

We shall first treat the special case of complete graphs. The study of
their star chromatic index is motivated by the results presented in 
Section \ref{sect:3} since they give rise to general upper bounds on the
star chromatic index.

\begin{theorem} 
\label{thm:upperKn}
The star chromatic index of the complete graph\/ $K_n$ satisfies 
$$
  \chis'(K_n) \le n \cdot \frac{ 2^{ 2\sqrt2(1+o(1)) \sqrt{\log n} } }{(\log n)^{1/4}} \,.
$$
In particular, for every $\eps>0$ there exists a constant $c$ such that
$\chis'(K_n) \le cn^{1+\eps}$ for every $n\ge 1$.
\end{theorem}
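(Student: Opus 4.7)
The plan is to give a purely algebraic construction: label the vertices of $K_n$ by elements of a carefully chosen subset $V \subseteq \mathbb{Z}_N$ of size $n$ in a cyclic group of odd order $N$, and color each edge $\{x, y\}$ by the sum $x + y \pmod N$. This uses at most $N$ colors, so the whole task reduces to making $N$ as small as possible while guaranteeing the star property.

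The coloring is clearly proper, since $x + y = x + z$ forces $y = z$. A bi-colored $4$-cycle on vertices $a, b, c, d$ (with non-adjacent edges sharing a color) would require $a + b = c + d$ and $b + c = a + d$; subtracting yields $2(a - c) \equiv 0 \pmod N$, which is impossible when $N$ is odd and $a \neq c$. A bi-colored path $a, b, c, d, e$ of length four would require, after a proper alternation $\alpha, \beta, \alpha, \beta$ of colors, that $a + b = c + d$ and $b + c = d + e$; subtracting yields $a + e \equiv 2c \pmod N$. This says precisely that $\{a, c, e\}$ is a non-trivial three-term arithmetic progression in $\mathbb{Z}_N$; and since the five vertices of the path are distinct, $a, c, e$ are three distinct elements of $V$. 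Hence it suffices to choose $V$ to contain no three-term arithmetic progression.

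To keep $N$ small, I would invoke Elkin's refinement of Behrend's construction, which produces a subset of $\{1, \ldots, M\}$ of size at least $cM (\log M)^{1/4} / 2^{2\sqrt 2 \sqrt{\log M}}$ avoiding three-term arithmetic progressions. Embedding such a set into $\mathbb{Z}_N$ with $N = 2M + 1$ (odd, and large enough that no wrap-around can create a spurious progression) and inverting the density bound gives
\[
  N \;\le\; n \cdot \frac{ 2^{2\sqrt 2 (1 + o(1)) \sqrt{\log n}} }{(\log n)^{1/4}},
\]
which is exactly the quantity appearing in the theorem. The polynomial corollary $\chis'(K_n) \le c\, n^{1+\varepsilon}$ follows immediately, since $2^{\sqrt{\log n}} = n^{o(1)}$.

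The conceptual heart of the proof, the reduction of the star property to the absence of three-term arithmetic progressions among vertex labels via the sum coloring, is short and elementary. The main obstacle is really just importing the sharpest known additive-combinatorics bound: once Elkin's theorem is accepted as a black box, and one is careful with parity (to kill $C_4$'s) and with embedding into $\mathbb{Z}_N$ without wrap-around (so that modular and integer $3$-APs coincide), the estimate falls out without further technical difficulty.
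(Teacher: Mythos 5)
Your proposal is correct and is essentially the paper's own argument: label vertices by a $3$-AP-free set obtained from Elkin's construction and color edge $\{x,y\}$ by $x+y$, so that a bi-colored path forces $a+e=2c$ and hence a nontrivial $3$-term progression. The only cosmetic difference is that you work in $\mathbb{Z}_N$ with $N$ odd (killing $4$-cycles by parity and saving a factor of about $2$ in the number of colors), whereas the paper works over the integers with colors in $\{1,\dots,2N\}$; the asymptotic bound and the concluding inversion of Elkin's density estimate are the same.
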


\begin{proof}
Let $A$ be an $n$-element set of integers, to be chosen later.
We will assume that the vertices of $K_n$ are exactly the
elements of~$A$, $V(K_n)=A$, and color the edge $ij$ by color $i+j$. 

Obviously, this defines a proper edge-coloring. 
Suppose that $ijklm$ is a bi-colored path (or bi-colored 4-cycle).
By definition of the coloring we have $i+j = k+l$ and $j+k = l+m$,
implying $i + m = 2k$. Thus, if we ensured that the set $A$ does not contain 
any solution to $i+m=2k$ with $i,m \ne k$ we would have found a star edge-coloring of $K_n$. 
It is easy to see, that such triple $(i,k,m)$ forms a 3-term arithmetic progression; 
luckily, a lot is known about sets without these progressions. 

We will use a construction due to Elkin \cite{Elkin} (see also \cite{GreenWolf} 
for a shorter exposition) who has improved an earlier result
of Behrend \cite{Behrend}. As shown by Elkin \cite{Elkin}, there is a set
$A \subset \{1, 2, \dots, N\}$ of cardinality at least $c_1 N(\log N)^{1/4}/2^{ 2\sqrt2 \sqrt{\log N} }$ 
such that $A$ contains no 3-term arithmetic progression. 

The defined coloring uses only colors $1, 2, \dots, 2N$ (possibly not all of them), 
thus we have shown that $\chis'(K_n) \le 2N$.
We still need to get a bound on $N$ in terms of $n$. 
In the following, $c_1,c_2,\dots$ are absolute constants. 

For every $\eps > 0$ we have
\begin{equation}
 n = |A| \ge c_1 N \frac{ (\log N)^{1/4} }{ 2^{ 2\sqrt2 \sqrt{\log N} } } 
         \ge c_2 N^{1-\eps} 
 \label{eq:A}
\end{equation}
Since we also have $N \le c_3 n^{1+\eps}$ for every $\eps > 0$, we may plug this in
(\ref{eq:A}) and use the fact that 
$(\log N)^{1/4}\, 2^{-2\sqrt2 \sqrt{\log N}}$ 
is a decreasing function of~$N$ for large $N$ to conclude that
$$
  n \ge c_4 N ((1+\eps)\log n)^{1/4}\ 2^{-2\sqrt2 \sqrt{(1+\eps)\log n}}\,.
$$
Thus we get 
$\displaystyle N \le c_5 n 2^{2\sqrt2 \sqrt{(1+\eps)\log n}}\,(\log n)^{-1/4}$. 
One more round of this `bootstrapping' yields the desired inequality
$$
  N \le n \frac{ 2^{ 2\sqrt2(1+o(1)) \sqrt{\log n} } }{(\log n)^{1/4}}\,.
$$
\end{proof}

\paragraph{Remark.} A tempting possibility for modification
is to use a set $A$ (in an arbitrary group) that contains
no 3-term arithmetic progression and $|A+A|$ is small.
Any such set could serve for our construction, with the same proof.
Even more generally, we only need a symmetric function $p:A\times A\to N$, 
where $A=\{1,2,\dots,n\}$, such that $p(a,\cdot): A\to N$ is a 1-1 function
for each fixed $a\in A$, $N$ is small, and $p$ does not yield bi-colored
paths (for all $i,j,k,l,m$ we either have $p(i,j)\ne p(k,l)$ or $p(j,k)\ne p(l,m)$).
We have been unable, however, to find a set that would yield
a better bound than that of Theorem \ref{thm:upperKn}.

\section{An upper bound for general graphs}
\label{sect:3}

The purpose of this section is to present a way to find 
star edge-coloring of an arbitrary graph~$G$, using 
a star edge-coloring of the complete graph $K_n$ with
$n = \Delta(G)+1$. 

We will use the concept of \emph{frugal colorings} as defined by 
Hind, Molloy and Reed \cite{HMR}. 
A proper vertex coloring of a graph is called \emph{$\beta$-frugal} if no more 
than $\beta$ vertices 
of the same color appear in the neighbourhood of a vertex.
Molloy and Reed \cite{MR,MR2} proved that every graph has an
$O(\log \Delta/\log\log \Delta)$-frugal coloring using $\Delta+1$ colors.
If $\Delta$ is large enough, one may use $50$ for the implicit constant
in the $O(\log \Delta/\log\log \Delta)$ asymptotics.

\begin{theorem} \label{thm:upperdelta}
For every graph $G$ of maximum degree $\Delta$ we have 
\begin{equation}
  \chis'(G) \le \chis'(K_{\Delta+1}) \cdot 
       O\Bigl(\frac{\log \Delta}{\log\log \Delta}\Bigr)^2
\label{eq:B}
\end{equation}
and therefore\/ $\chis'(G) \le \Delta \cdot 2^{O(1) \sqrt{\log \Delta}}$. 
\end{theorem}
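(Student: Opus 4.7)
The plan is to combine three ingredients: a frugal proper vertex coloring of $G$, a star edge-coloring of $K_{\Delta+1}$, and local strong edge-colorings of the bipartite subgraphs between pairs of vertex-color classes. The underlying idea is that the frugal vertex coloring lets each edge of $G$ inherit a ``base'' color from an edge of $K_{\Delta+1}$, and a secondary local coloring breaks ties among $G$-edges that inherit the same base color.

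Concretely, I would first apply the Molloy--Reed result \cite{MR,MR2} to fix a proper $(\Delta+1)$-vertex-coloring $\phi:V(G)\to\{1,\ldots,\Delta+1\}$ that is $\beta$-frugal with $\beta=O(\log\Delta/\log\log\Delta)$. Set $V_i=\phi^{-1}(i)$ and, for each pair $\{i,j\}$, let $G_{ij}$ be the bipartite subgraph of $G$ on $V_i\cup V_j$ consisting of all edges between $V_i$ and $V_j$; frugality forces $\Delta(G_{ij})\le\beta$. Next, fix a star edge-coloring $\psi$ of $K_{\Delta+1}$ using $\chis'(K_{\Delta+1})$ colors, and for each $\{i,j\}$ fix a \emph{strong} edge-coloring $\gamma_{ij}$ of $G_{ij}$---that is, a coloring in which any two edges at line-graph distance at most~$2$ receive distinct colors. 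A straightforward greedy argument provides such $\gamma_{ij}$ using $O(\beta^2)$ colors, and since every strong edge-coloring is automatically a star edge-coloring, $\gamma_{ij}$ is in particular a star edge-coloring of $G_{ij}$. Finally, I would color each edge $uv\in E(G)$ with $\phi(u)=i$, $\phi(v)=j$ by the pair $\bigl(\psi(ij),\gamma_{ij}(uv)\bigr)$. The total number of colors is $\chis'(K_{\Delta+1})\cdot O(\beta^2)$, which matches (\ref{eq:B}); plugging in Theorem~\ref{thm:upperKn} and absorbing the polylogarithmic factors into $2^{O(\sqrt{\log\Delta})}$ then yields the second stated bound.

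The main step---and the main obstacle---is verifying that the combined coloring is actually a star edge-coloring. Properness is immediate from properness of $\psi$ and of the $\gamma_{ij}$. For the star condition I would argue by contradiction: suppose $v_1v_2v_3v_4v_5$ is a bi-colored path or $4$-cycle under two colors $(a_1,a_2)\ne(b_1,b_2)$, and set $w_t=\phi(v_t)$. If $a_1=b_1$, then all four $\psi$-images $w_tw_{t+1}$ receive a common $\psi$-color, so properness of $\psi$ forces them to coincide as a single edge $xy$ of $K_{\Delta+1}$; the whole path then lies in $G_{xy}$ and is bi-colored under $\gamma_{xy}$, contradicting the star property of $\gamma_{xy}$. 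If $a_1\ne b_1$, the two $\psi$-monochromatic pairs $\{w_1w_2,w_3w_4\}$ and $\{w_2w_3,w_4w_5\}$ must each consist of either identical or vertex-disjoint edges of $K_{\Delta+1}$; a short case check shows that the ``identical'' subcases all force $a_1=b_1$ (a contradiction), leaving the case where both pairs are vertex-disjoint, in which $w_1,\ldots,w_5$ form either a bi-colored path of length~$4$ or a bi-colored $4$-cycle in $K_{\Delta+1}$, again contradicting the star property of $\psi$.
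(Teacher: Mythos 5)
Your proposal is correct and follows essentially the same route as the paper: a $\beta$-frugal $(\Delta+1)$-coloring with $\beta=O(\log\Delta/\log\log\Delta)$, a star edge-coloring of $K_{\Delta+1}$ on the color classes, a strong (square-of-line-graph) edge-coloring of each $G_{ij}$ with $O(\beta^2)$ colors, and the product coloring, with the same case analysis (degenerate trail handled by the local strong coloring, non-degenerate trail giving a bi-colored path or $4$-cycle in $K_{\Delta+1}$). The verification is sound; only the remark that the ``identical'' subcases all force $a_1=b_1$ is a slight overstatement, since one branch is instead excluded by properness of the vertex coloring, but the contradiction stands either way.
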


\begin{proof}
Using the above-mentioned result of Molloy and Reed \cite{MR2}, we find 
a $\beta$-frugal $(\Delta+1)$-coloring $f$ with 
$\beta = O(\log \Delta/\log\log \Delta)$.
We assume the colors used by $f$ are the vertices of $K_{\Delta+1}$, 
so that the frugal coloring is $f: V(G) \to V(K_{\Delta+1})$. 
Let $c$ be a star edge-coloring of $K_{\Delta+1}$.
A natural attempt is to color the edge $uv$ of $G$ by $c(f(u)f(v))$. 
This coloring, however, may not even be proper: if a vertex $v$ has 
neighbours $u$ and $w$ of the same color, then the edges $vu$ and $vw$ will
be of the same color. To resolve this, we shall produce another edge-coloring,
with the aim to distinguish these edges; then we will combine the two
colorings. 

We define an auxiliary coloring $g$ of $E(G)$ using $2\beta^2$ 
colors. Let us first set 
$$
  V_i = \{ v \in V(G) : f(v) = i \}, \qquad i \in V(K_{\Delta+1})
$$
and define the induced subgraphs $G_{ij} = G[V_i \cup V_j]$. 
For each pair $\{i,j\}$ we shall define the coloring $g$ on the edges
of $G_{ij}$; in the end this will define $g(e)$ for every
edge $e$ of $G$. Recall that the frugality of $f$ implies
that the maximum degree in $G_{ij}$ is at most $\beta$. Consequently, 
the maximum degree in the (distance) square of $L(G_{ij})$ is at most 
$2\beta(\beta-1) < 2\beta^2$. Therefore, we can find a coloring 
of $E(G_{ij})$ using $2\beta^2$ colors so that no two edges of this graph 
have the same color, if their distance in the line graph is 1 or 2.

Now we can define the desired star edge-coloring of $G$: 
we color an edge $uv$ by the pair 
$$
  h(uv) = (c(f(u)f(v)), g(uv)) \,.
$$

First, we show this coloring is proper. Consider adjacent
(distinct) edges $vu$ and $vw$. If $f(u) \ne f(w)$, then $f(u)f(v)$
and $f(v)f(w)$ are two distinct adjacent edges of $K_{\Delta+1}$, 
hence $c$ assigns them distinct colors. 
On the other hand, if $f(u)=f(w) = i$ (say), we put $j=f(v)$
and notice that $uv$ and $vw$ are two adjacent edges
of $G_{ij}$, hence the coloring $g$ distinguishes them. 

It remains to show that $G$ has no 4-path or cycle colored
with two alternating colors. Let us call such
object simply a \emph{bad path} (considering $C_4$ as a closed path). 
Suppose for a contradiction that the path $uvwxy$ is bad. 
By looking at the first coordinate of $h$ we observe
that the $c$-color of the edges of the trail $f(u)f(v)f(w)f(x)f(y)$
assumes either just one value or two alternating ones. 
As $c$ is a star edge-coloring of $K_{\Delta+1}$, 
this trail cannot be a path (nor a 4-cycle).
A simple case analysis shows that in fact $f(u)=f(w)=f(y)$
and $f(v)=f(x)$. Put $i = f(u)$, $j=f(v)$ and consider
again the $g$ coloring of $G_{ij}$. By construction,
$g(uv) \ne g(wx)$, showing that $uvwxy$ is not a bad
path, a contradiction. 
\end{proof}

As we saw in this section, an upper bound on the star chromatic
index of $K_n$ yields a slightly weaker result for general 
bounded degree graphs. We wish to note that, if convenient, one
may start with other special graphs in place of $K_n$, in particular with $K_{n,n}$. 
It is easy to see that 
$$
   \chis'(K_{n,n}) \le \chis'(K_n) + n 
$$
(if the vertices of $K_{n,n}$ are $a_i, b_i$ ($i=1, \dots, n$) 
then we color edges $a_ib_j$ and $a_jb_i$ using the color
of the edge $ij$ in $K_n$, while each edge $a_ib_i$ gets a unique color).
On the other hand, a simple recursion yields an estimate
$$
 \chis'(K_n) \le \sum_{i=1}^{\ceil{\log_2 n}} 2^{i-1} \chis'(K_{ \ceil{n/2^i}, \ceil{n/2^i}})   \,.
$$
From this it follows that 
if $\chis'(K_{n,n})$ is $O(n)$ (or $n (\log n)^{O(1)}$, $n^{1+o(1)}$, respectively) 
then $\chis'(K_{n})$ is $O(n \log n)$ (or $n (\log n)^{O(1)}$, $n^{1+o(1)}$, respectively).

\section{A lower bound for $\chis'(K_n)$}

Our best lower bound on $\chis'(K_n)$ is provided below and is linear in terms
of $n$. The upper bound from Theorem \ref{thm:upperKn} is more than
a polylogarithmic factor away from this. So, even the asymptotic behaviour of
$\chis'(K_n)$ remains a mystery.

\begin{theorem} \label{thm:lowerdelta}
The star chromatic index of the complete graph\/ $K_n$ satisfies 
$$
  \chis'(K_n) \ge 2n(1+o(1)).
$$ 
\end{theorem}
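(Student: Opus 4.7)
The plan is to exploit the structure imposed by the star condition on pairs of colors incident to a common vertex and then combine it with a convexity argument. For each vertex $v$ of $K_n$, the $n-1$ edges incident to $v$ receive distinct colors; let $C_v$ denote this palette of size $n-1$, and for each color $\alpha$ let $V_\alpha$ be the set of vertices incident to an $\alpha$-coloured edge, so that $\sum_\alpha |V_\alpha| = n(n-1)$. The key quantity to estimate in two ways is
\[
   N \;=\; \sum_\alpha \binom{|V_\alpha|}{2} \;=\; \sum_{\{u,v\}} |C_v \cap C_u|,
\]
where the second equality is a standard double count.

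Next I would establish a local constraint at each vertex $v$. Index the neighbours of $v$ as $u_1,\ldots,u_{n-1}$ so that $vu_i$ has color $\alpha_i$, and form the $(n-1)\times(n-1)$ matrix $R$ with $R_{i,j}=1$ iff $\alpha_j\in C_{u_i}$. Trivially $R_{i,i}=1$. The key claim is that for $i\ne j$ one cannot have both $R_{i,j}=1$ and $R_{j,i}=1$: if $u_iu_i'$ has color $\alpha_j$ and $u_ju_j'$ has color $\alpha_i$, then properness forces $u_i',u_j'\notin\{v,u_i,u_j\}$; if $u_i'\ne u_j'$ then $u_i'u_ivu_ju_j'$ is a bi-coloured $P_5$, and if $u_i'=u_j'$ then $vu_iu_i'u_j$ is a bi-coloured $C_4$. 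Both are forbidden. Therefore the total number of ones in $R$ is at most $(n-1)+\binom{n-1}{2}=\binom{n}{2}$, and since the $i$th row sum of $R$ equals $|C_v\cap C_{u_i}|$, we obtain $\sum_{u\ne v}|C_v\cap C_u|\le\binom{n}{2}$.

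Summing this inequality over all $v$ and halving gives $N\le\tfrac12 n\binom{n}{2}=n^2(n-1)/4$. On the other hand, applying Jensen's inequality to the convex function $x\mapsto\binom{x}{2}$ with $k=\chis'(K_n)$ classes of sizes $|V_\alpha|$ totaling $n(n-1)$ yields $N\ge k\binom{n(n-1)/k}{2}$. A short manipulation then gives
\[
  k \;\ge\; \frac{2n(n-1)}{n+2} \;=\; 2n-6+O(1/n),
\]
which is the desired bound $\chis'(K_n)\ge 2n(1+o(1))$.

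The main obstacle is verifying the local claim through the careful case analysis above: one must check that $\alpha_j\in C_{u_i}$ together with $\alpha_i\in C_{u_j}$ always forces either a bi-coloured $P_5$ or a bi-coloured $C_4$, and for this both parts of the star condition are essential. Once this is in place, the remainder is just a global double count and an application of convexity.
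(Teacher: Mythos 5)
Your argument is correct and yields the paper's exact bound $\chis'(K_n)\ge 2n(n-1)/(n+2)=2n(1+o(1))$, but its key lemma is organized differently from the paper's. The paper double counts bi-colored $3$-edge paths: writing $a_i$ for the number of edges of color $i$ and $b_{i,j}$ for the number of paths colored $i,j,i$, it uses the identity $\sum_j b_{i,j}=4\binom{a_i}{2}$ together with a per-color bound $\sum_i b_{i,j}\le a_j(n-2a_j)$, proved by showing that the two outer edges of an $i,j,i$ path must leave the vertex set of the color-$j$ matching (again because a bi-colored path of length four or a bi-colored $4$-cycle would otherwise appear), and then finishes with Cauchy--Schwarz. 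You instead count palette intersections $N=\sum_{\{u,v\}}|C_u\cap C_v|=\sum_\alpha\binom{|V_\alpha|}{2}$, bound $N$ from above by a per-vertex antisymmetry claim (for $i\ne j$ at most one of $\alpha_j\in C_{u_i}$, $\alpha_i\in C_{u_j}$ can hold, by your $P_5$/$C_4$ case analysis, which is complete: the distinctness of $u_i',u_j'$ from $v,u_i,u_j$ does follow from properness), and from below by Jensen. The two routes are closely parallel: since $|C_u\cap C_v|-1$ is exactly the number of bi-colored $3$-edge paths with middle edge $uv$, your $N$ equals $\binom{n}{2}+\sum_{i,j}b_{i,j}$, and in fact both proofs reduce to the identical intermediate inequality $4\sum_i a_i^2\le (n+2)\binom{n}{2}$ before the convexity step (Cauchy--Schwarz and Jensen play the same role here). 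What your version buys is that the structural input is localized at a single vertex, giving the clean, size-independent bound $\sum_{u\ne v}|C_v\cap C_u|\le\binom{n}{2}$, a self-contained statement about star edge-colorings, whereas the paper localizes at a color class and carries the dependence on the $a_j$ to the end; neither route gives a stronger final estimate.
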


\begin{proof}
Assume there is a star edge-coloring of $K_n$ using $b$ colors.
%
%
Let $a_i$ be the number of edges of color $i$, 
let $b_{i,j}$ be the number of 3-edge paths colored $i,j,i$.
We set up a double-counting argument. Note that all sums over
$i$, $j$ are assumed to be over all available colors
(that is, from 1 to $b$). As every edge gets one color, we have
\begin{equation}
  \sum_i a_i = \binom n2 \,.
\label{eq:cc}
\end{equation}
Fixing $i$, we have a matching $M_i$ with $a_i$ edges and each 
edge sharing both ends with an edge from $M_i$ contributes to 
some $b_{i,j}$. Consequently, 
\begin{equation}
  \sum_j b_{i,j} = 4\binom {a_i}2 \,.
\label{eq:aa}
\end{equation}
Finally, we fix color $j$ and observe that each 3-edge
path colored $i,j,i$ (for some~$i$) uses two edges among
the $2a_j \cdot (n-2a_j)$ edges connecting a vertex of $M_j$
to a vertex outside of $M_j$. This leads to
\begin{equation}
  \sum_i b_{i,j} \le a_j (n-2a_j) \,.
\label{eq:bb}
\end{equation}
Now we use (\ref{eq:aa}) and (\ref{eq:bb}) to evaluate the double sum 
$\sum_{i,j} b_{i,j}$ in two ways, getting
$$
  4 \sum_i \binom {a_i}2 \le  \sum_j a_j(n-2a_j) \,.
$$
This inequality reduces to
$$
  4 \sum_i a_i^2 \le (n+2) \sum_i a_i \,.
$$
By the Cauchy-Schwartz inequality, $(\sum a_i)^2\le b\cdot \sum a_i^2$,
and then using (\ref{eq:cc}), we obtain 
$$
   4 \binom{n}{2} \le b(n+2).
$$
Therefore, $b\ge 2 n(n-1)/(n+2) = (2+o(1))n$.
\end{proof}

\section{Subcubic graphs}

A regular graph of degree three is said to be \emph{cubic}. A graph of maximum degree at most three is \emph{subcubic}.
A graph $G$ is said to cover a graph $H$ if there is a graph homomorphism from $G$ to $H$ that is locally bijective.
Explicitly, there is a mapping $f:V(G) \to V(H)$ such that whenever $uv$ is an edge of $G$, the image
$f(u) f(v)$ is an edge of $H$, and for each vertex $v \in V(G)$, $f$ is a bijection between 
the neighbours of $v$ and the neighbours of $f(v)$. 

\begin{theorem} 
\label{thm:cubic}
{\rm (a)} If\/ $G$ is a subcubic graph, then $\chis'(G) \le 7$. 

{\rm (b)} If\/ $G$ is a simple cubic graph, then $\chis'(G) \ge 4$, and the equality holds if and only if\/ $G$ covers the graph of the $3$-cube.
\end{theorem}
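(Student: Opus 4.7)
For (a), I would argue by induction on $|E(G)|$. Remove a suitable edge $e=uv$, apply the inductive hypothesis to $G-e$, and extend the coloring to $e$. The forbidden colors at $e$ are those of edges adjacent to $e$ (at most $4$) together with those that would close a bi-colored $4$-path or $4$-cycle through $e$; a careful count should show this total is at most $6$, leaving a valid choice among $7$ colors. The main difficulty is bounding the star-induced constraints and dealing with configurations (such as short cycles through $e$) where several such constraints might coincide.

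For the lower bound in (b), the argument is short: if $\chis'(G)=3$ for simple cubic $G$, each color class is a perfect matching, and the union of any two color classes is a $2$-regular simple graph, hence a disjoint union of cycles of even length $\ge 4$. Each such cycle is bi-colored, giving either a forbidden $4$-cycle (if length $4$) or a forbidden bi-colored $4$-path (if longer).

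For the equality characterization, I view $Q_3$ as the tensor product $K_4\times K_2$, i.e.\ the bipartite double cover of $K_4$. For the ``if'' direction, exhibit an explicit star $4$-edge-coloring of $Q_3$ (easily produced via its prism structure $C_4\,\square\, K_2$, checking by hand that each pair of color classes induces only paths of length at most $3$), and pull it back through a covering $f: G \to Q_3$. A hypothetical bi-colored $4$-path in $G$ would descend under $f$ to a closed walk of length $4$ in $Q_3$ with alternating colors; local bijectivity of $f$ forbids backtracking, and since $Q_3$ has girth $4$ and is simple, the walk must in fact be a $4$-path or a $4$-cycle in $Q_3$, contradicting the star coloring there.

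For the ``only if'' direction, given a star $4$-edge-coloring $\varphi$ of cubic $G$, define $\mu(v)\in\{1,2,3,4\}$ as the unique color missing at $v$. The plan is to establish three steps. First, $\mu: G \to K_4$ is a graph covering: $\mu(u)\ne\mu(v)$ for each edge $uv$, and the three neighbors of any vertex $v$ realize all three $\mu$-values in $\{1,2,3,4\}\setminus\{\mu(v)\}$ --- both facts proved by contradiction, exhibiting in each failure case a bi-colored $4$-path or $4$-cycle from a short local configuration. Second, and most delicate, $G$ is bipartite: triangles in $G$ can be excluded by direct case analysis on the three edge colors of the triangle together with the forced pendant-edge colors (each case ends in a bi-colored $4$-path), and longer odd cycles should be handled by an extension of this method, aided by the rigidity that the star condition forces on the structure of $\varphi$ (in particular, each pairwise union $M_i\cup M_j$ of color classes consists of paths of length exactly $3$, and all classes $\mu^{-1}(i)$ have equal size $n/4$). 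Third, with a bipartition $\sigma: V(G)\to\{0,1\}$ in hand, the map $v\mapsto (\mu(v),\sigma(v))\in V(K_4)\times V(K_2)=V(Q_3)$ preserves edges (both coordinates change across each edge) and is locally bijective (the $\mu$-coordinate is a bijection of $N(v)$ onto the three non-$\mu(v)$ colors; the $\sigma$-coordinate flips on every edge), hence a covering. The main obstacle is the bipartiteness of $G$ in this necessity direction.
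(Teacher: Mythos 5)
Your sketch of part (a) does not go through as stated. The induction ``remove an edge $e=uv$, color $G-e$, extend'' requires that at most $6$ colors be forbidden at $e$, but the honest count is larger: besides the up to four colors on edges adjacent to $e$, every edge at distance two from $e$ (up to eight of them in a subcubic graph) can forbid its color, namely whenever the intermediate edge's color reappears on the far side, so the constraints can exceed the seven available colors, and there is no evident way to always choose a ``suitable'' edge where they do not. You flag this as the main difficulty, but it is not a technicality to be smoothed over --- it is the reason the paper argues globally instead: a minimal counterexample is shown to be $2$-connected, without parallel edges, without two nearby degree-two vertices and without the configurations of Figure~\ref{fig:auxreduc}; degree-two vertices are then suppressed to get a bridgeless cubic $G'$, a theorem of Kaiser and \v{S}krekovski supplies a perfect matching $M'$ missing every minimal $3$- and $4$-cut, an auxiliary graph on $M$ is $4$-colored via Brooks' theorem, and the complementary $2$-factor is star $3$-colored (with special handling of $5$-cycles and of the exceptional case $K=K_5$, i.e.\ the Petersen-like situation). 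Some machinery of this kind, not a local greedy extension, is what makes $7$ colors suffice.

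In part (b), the lower bound and the ``if'' direction are correct and essentially the paper's arguments (one nitpick: a $4$-path in $G$ descends to a non-backtracking \emph{walk}, not a closed walk, of length $4$, which by girth $4$ must be a $4$-path or $4$-cycle). Your first step of the ``only if'' direction --- the missing-color map $\mu$ is a covering $G\to K_4$ --- is exactly the paper's, and your third step is sound, since $Q_3\cong K_4\times K_2$ and $(\mu,\sigma)$ is then locally bijective. But the entire weight now rests on the bipartiteness of $G$, which you do not prove: ruling out triangles is easy, while ``an extension of this method to longer odd cycles'' is precisely the missing content. The paper avoids this by a local-pattern argument: at each vertex the assignment (color of incident edge $\mapsto$ missing color of its far endpoint) is one of two cyclic permutations, knowing one value determines the whole pattern, hence the pattern at a vertex determines the pattern at every neighbour; since in $Q_3$ the two vertices of each missing-color class carry the two patterns, the pair (missing color, pattern) defines the covering $G\to Q_3$ directly, and bipartiteness falls out as a corollary. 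Note also that the naive fix ``the pattern alternates across every edge'' is delicate: under the obvious sign convention (ascending versus descending cyclic order of the three non-missing colors) the sign does \emph{not} simply flip along each edge, so the consistent $\pm$-labelling you would need is exactly the $Q_3$-bipartition produced by the covering argument. As it stands, the bipartiteness step is a genuine gap in your proof of the characterization.
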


For the part (a) of this theorem we will need the following lemma. It seems to be possible 
to use this lemma for other classes of graphs, therefore it might be of certain independent interest. 

\begin{lemma} 
\label{thm:recursive}
Let $f:E(G) \to \{1, \dots, k\}$ be a $k$-edge-coloring.

{\rm (a)} Let $e$ be an edge of $G$. Suppose that the restriction of $f$ to $E(G)\setminus\{e\}$ 
is a star edge-coloring of $G - e$ and that $f(e)$ is distinct from $f(e')$ 
whenever $d(e,e') \le 2$ (that is, either $e, e'$ share a vertex, or a
common adjacent edge). 
Then $f$ is a star edge-coloring of $G$. 

\bigskip

{\rm (b)} Let $A$ be a set of vertices of $G$, let $B = V(G) \setminus A$, 
and let $X$ be the set of edges with one end in $A$ and the other in $B$. 
Suppose that 
\begin{enumerate}
\setlength{\itemsep}{1pt} \setlength{\parskip}{0pt} \setlength{\parsep}{0pt}
\item (a restriction of) $f$ is a star edge-coloring of\/ $G[A]$; 
\item (a restriction of) $f$ is a star edge-coloring of\/ $G[B]$; 
\item no edges $e_1, e_2$ in $X$ share a common vertex in $A$ or a common adjacent edge in $G[A]$; 
\item for every edge $e \in X$ and every edge $e'$ in $G[B] \cup X$ such that 
    $d(e,e') \le 2$ we have $f(e) \ne f(e')$
    (distance is measured in $G[B] \cup X$, not in $G$);
\item for every edge $e \in X$ and every edge $e'$ in $G[A]$ we have $f(e) \ne f(e')$.
\end{enumerate}
Then $f$ is a star edge-coloring of $G$. 
\end{lemma}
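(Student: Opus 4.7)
For part (a), observe that the restriction of $f$ to $E(G)\setminus\{e\}$ is already a star edge-coloring of $G-e$, so any failure of $f$ to be a star edge-coloring of $G$ must involve the edge $e$. Properness of $f$ at $e$ is exactly the $d(e,e')=1$ instance of the hypothesis. If $G$ contained a bi-colored $4$-path or $4$-cycle $P$ through $e$, its edges would alternate between two colors, so the edge $e'$ of $P$ at distance exactly $2$ from $e$ inside $P$ would share the color of $e$; but $d(e,e')\le 2$ in $G$, contradicting the hypothesis.

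For part (b), I would first verify that $f$ is a proper edge-coloring by a small case analysis on where two adjacent edges lie among $G[A]$, $G[B]$, $X$: conditions (1) and (2) cover the intra-side pairs; (5) handles a $G[A]$-edge adjacent to an $X$-edge; for adjacent edges both lying in $G[B]\cup X$ the induced distance is $1$, so (4) applies; and two adjacent $X$-edges must, by (3), share their common vertex in $B$, again reducing to (4).

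To exclude a bi-colored $4$-path or $4$-cycle $P$ with colors $\{a,b\}$, classify each edge of $P$ as lying in $G[A]$, $G[B]$, or $X$. If no edge of $P$ lies in $X$, then $V(P)$ is contained entirely in $A$ or entirely in $B$ (because $P$ is connected and $A,B$ are vertex-disjoint), so (1) or (2) is violated. Otherwise, pick $e\in X\cap E(P)$ and let $e'$ be the edge of $P$ at distance $2$ from $e$ along $P$; by alternation $f(e')=f(e)$. If $e'\in G[A]$, condition (5) fails. If $e'\in G[B]\cup X$, consider the middle edge $m$ of a length-two subpath from $e$ to $e'$ in $P$; in the cycle case there are two such subpaths, and at least one has $m\in G[B]\cup X$, since otherwise all four cycle vertices would lie in $A$, contradicting $e\in X$. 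When $m\in G[B]\cup X$, the distance between $e$ and $e'$ inside $G[B]\cup X$ is at most $2$, so (4) yields a contradiction.

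The remaining situation --- which I anticipate to be the main obstacle --- is a $4$-path $v_0v_1v_2v_3v_4$ with $e=v_0v_1\in X$, middle edge $m=v_1v_2\in G[A]$, and partner $e'=v_2v_3\in G[B]\cup X$. Since $v_1,v_2\in A$ we have $v_0\in B$, while $e'\in G[B]$ is excluded by $v_2\in A$, so $e'\in X$ and $v_3\in B$. Then $e$ and $e'$ are two distinct $X$-edges with $A$-endpoints $v_1,v_2$ sharing the common adjacent edge $m\in G[A]$, which is precisely the configuration forbidden by (3). Assembling all cases completes the argument.
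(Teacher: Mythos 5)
Your proof of part (a) is the paper's own argument: any bad path or $4$-cycle must pass through $e$, and alternation forces an edge of $e$'s color within distance $2$, contradicting the hypothesis. For part (b) you are also correct, but you organize it differently. The paper does not re-examine bad paths at all: it first observes that conditions (3)--(5) together imply $f(e)\ne f(e')$ for every $e\in X$ and every $e'\in E(G)$ with $d_G(e,e')\le 2$ (the case of a middle edge in $G[A]$ being excluded outright by (3), exactly as in your final paragraph), and then obtains the conclusion by starting from the star edge-colored disjoint union $G[A]\cup G[B]$ and adding the edges of $X$ one at a time, invoking part (a) at each step (distances only grow when passing to subgraphs, so the hypothesis of (a) holds throughout). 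You instead verify properness and the absence of bi-colored $4$-paths and $4$-cycles directly, classifying the partner edge $e'$ and the middle edge $m$ by their location in $G[A]$, $G[B]$, or $X$; the cases you enumerate (including the observation that in a $4$-cycle not both middle edges can lie in $G[A]$, and that the path configuration $X$--$G[A]$--$X$ is forbidden by (3)) are sound and complete. In effect your write-up expands the case analysis that the paper compresses into its one-sentence claim, while the paper's route is shorter because it recycles part (a) as the engine; both hinge on the same key point, namely that condition (3) is what rules out two same-colored $X$-edges joined through $G[A]$, which condition (4) cannot see.
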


\begin{proofof}{of the lemma}
(a) Since $f$ is a star edge-coloring of $G-e$, no 4-path (or 4-cycle) in $G-e$ is bi-colored. 
If $P$ is a bi-colored 4-path (4-cycle)
containing $e$, then $P$ contains an edge of the same color as $e$ at distance $\le 2$ from $e$, 
a contradiction. 

(b) Conditions (3), (4), (5) imply that for every edge $e \in X$ and every edge $e'\in E(G)$, 
if $d(e,e')\le 2$, then $f(e) \ne f(e')$. Therefore, we can repeatedly apply part (a), starting
with the graph $G[A] \cup G[B]$ and adding one edge of $X$ at a time. 
\end{proofof}

To explain a bit the conditions of part (b) in the above lemma: the point here is that
in the condition 5, we do not check what is the distance of $e$ and $e'$. In our applications, 
$A$ will be a particular small subgraph of $G$ (such as those in Figure~\ref{fig:auxreduc}) 
and $B$ the `unknown' rest of the graph. We do not
want to distinguish whether some edges in $X$ share a vertex in $B$. This, however, may create
new 4-paths, henceforth the particular formulation of this lemma. 

\begin{proofof}{of the theorem}  
(a) Trying to get a contradiction, let us assume that $G$ is a subcubic
graph with the minimum number of edges for which $\chis'(G)>7$. 
We first prove several properties of $G$ (connectivity, absence of various
small subgraphs). This will eventually allow us to construct the desired 
7-edge-coloring by decomposing $G$ into a collection of cycles connected by paths of length 1 or~2. 

Clearly, \textbf{$G$ G is connected.} 
\textbf{Suppose that $G$ contains a cut-edge $xy$.} Let $G_x$ and
$G_y$ be the components of $G-xy$ which contain the vertex $x$ and $y$,
respectively. By the minimality of $G$, each of $G_x$ and $G_y$ admits a
star 7-edge-coloring. In $G_x$ there are at most 6 edges that are 
incident to a neighbor of $x$. By permuting the colors, we may assume that
color 7 is not used on these edges. Similarly, we may assume that color 7
is not used on the edges in $G_y$ that are incident with neighbors of $y$.
Then we can color the edge $xy$ by using color 7 and obtain a star
7-edge-coloring of $G$ (we use Lemma~\ref{thm:recursive}(a)). 
This contradiction shows that \textbf{$G$~is 2-connected.}

If $G$ contained \textbf{a path $wxyz$, where $x$ and $y$ are degree~2 vertices,} 
then we could color $G-xy$ by induction, and extend the coloring to a star
7-edge-coloring of $G$ by using Lemma~\ref{thm:recursive}(a). (For the 
edge $e=xy$ we use a color that does not appear on the at most six edges incident to $w$ or $z$.) 
Thus, \textbf{such path $wxyz$ does not exist.} In particular, $G$~is not a cycle. 

Suppose next that $G$ contains \textbf{a degree~2 vertex $z$ whose neighbors $x$ and $y$ are adjacent.} 
We will use Lemma~\ref{thm:recursive}(a) for $e=xz$. By induction we may find
a star edge-coloring of $G-e$, and as there are at most six edges in $G$ at
distance $\le 2$ from $e$, we can extend the coloring to $e$ to satisfy 
the condition of the lemma. 
So the graph $G$ can be star edge-colored using 7 colors, a contradiction. 
This shows that \textbf{the neighbors of a degree~2 vertex cannot be adjacent in $G$.} 

Further suppose that $G$ contains \textbf{parallel edges.} Three parallel edges 
would constitute the whole (easy to color) graph, so suppose there are 
two parallel edges between vertices $u$ and $v$. 
Unless $G$ contains a bridge, or $G$ has at most three vertices (and is easy to color), 
there are neighbors $u'$ of $u$, $v'$ of $v$ and $u' \ne v'$. 
By induction we can color $G \setminus \{u,v\}$. Next, we extend this coloring to 
the edges $uu'$, $vv'$, so that each of them has different color than the $\le 6$
edges at distance $\le 2$ from it. Now we distinguish two cases.
If $uu'$ and $vv'$ have different colors, say $a$ and $b$, then it is enough to use 
on the two parallel edges any two distinct colors that are different from $a$ and~$b$. 
If $uu'$ and $vv'$ have the same color, then there are at most 5 colors of edges 
at distance $\le 2$ from the parallel edges, so we may again use Lemma~\ref{thm:recursive}, part (a). 
So, \textbf{$G$ does not contain parallel edges.} 

Next we suppose that $G$ contains \textbf{one of the first three graphs in Figure~\ref{fig:auxreduc}} 
as a subgraph, where other edges of $G$ attach only 
at the vertices denoted by the empty circles, and some of these vertices may be identified. 
We use Lemma~\ref{thm:recursive}, part (b). We let $A$ be the set of 
vertices of the subgraph in the figure that are denoted by full circles,
so $X$ is the set of the three thick edges. 
By induction, $G[B]$ is star 7-edge-colorable. This coloring can be extended
to $G[B] \cup X$ so that color of each edge $e$ in $X$ differs from the color of all
edges at distance $\le 2$ from $e$ (there are at most 6 such edges). 
We assume that the colors used on $X$ are in $\{5, 6, 7\}$. 
For $G[A]$ we use the coloring as shown in the figure. This satisfies conditions
of Lemma~\ref{thm:recursive}, part (b), and therefore $G$ can be
star 7-edge-colored. 

Next suppose that $G$ contains \textbf{the fourth graph in Figure~\ref{fig:auxreduc}} 
as a subgraph (again, other edges can only attach at the `empty' vertices, 
some of which may be identified). 
We use Lemma~\ref{thm:recursive}, part (b) to show that $G-e$ is 
7-edge-colored in a particular way that allows us to use 
Lemma~\ref{thm:recursive}, part (a) to extend the coloring on $e$. 
We let $A$ be the vertices of the pentagon, so that $X$ is the
set of the three thick edges. Note that the conditions of the part (b) are 
satisfied for $G-e$, but not for $G$ itself. 
By induction there is a star 7-edge-coloring of $G[B]$, and we
again extend it to $X$ so that the edges in $X$ have distinct color
from edges at distance $\le 2$. Observe that there are at most six
edges in $G[B] \cup X$ that are at distance $\le 2$ from $e$, 
so there is a color, say $C$, not used on any of those. 
We shall reserve $C$ to be used at $e$. First, however, 
we apply part~(b) to color the graph~$G-e$. We use the coloring of $G[A]$
shown in the figure, assuming that $C \not\in \{1,2,3\}$ and 
that none of the colors $1,2,3$ is used on $X$. 
Finally, we use part~(a) to extend the coloring on $G$, 
letting the color of $e$ be $C$. 

As the last reduction, we show that \textbf{$G$ does not contain a path $wxyz$, 
where $w$ and $z$ are degree~2 vertices.} 
If $G$ did contain such a path, 
we could color $H = G - \{w, x, y, z\}$. Next, we describe how to extend 
this coloring to a star 7-edge-coloring of~$G$. We will denote the edges 
as in Figure~\ref{fig:pathreduc}; to ease the notation we will use $a$ to 
denote both the edge and its color. 

We may assume that all vertices among $w, x, y, z$, and their neighbours
are distinct (*), as otherwise $G$ contains one of the previously 
handled subgraphs --- those in Figure~\ref{fig:auxreduc}, triangle with a degree~2 vertex, 
parallel edges or two adjacent degree~2 vertices (the straightforward checking is left to the reader). 
It may, however, happen that, e.g., edges $s$ and $u$ have an edge adjacent 
to both of them. This has no effect on the proof, we only will have, say, $b=e$. 

The edges $a$, \dots, $h$ are 
part of $H$, so they are colored already. Similarly as in the previous cases, 
we choose a color for $s$, $t$, $u$, $v$ so that none of these edges shares a color
with an edge of $G-\{p,q,r\}$ at distance at most~2; 
using 7 colors, this is easy to achieve. 
Condition~(*) in the previous paragraph 
implies, that every 3-edge path starting at $w$, $x$, $y$~or~$z$ by an edge~$s$, $t$, $u$~or~$v$
avoids edges $p$, $q$, $r$ --- consequently, no such path has first and last edge of the same color, 
and no such path can be part of a bi-colored 4-path or 4-cycle. This greatly reduces the number 
of path and cycles we need to take care of. 

Next, we pick a color for $q$ that differs from $c$, $d$, $e$, $f$, $t$, and $u$. 

\begin{figure}[h]
\centerline{
 \includegraphics[scale=1,page=1]{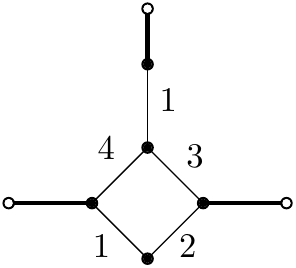}
 \hskip -1cm
 \includegraphics[scale=1,page=2]{figauxreduc}
 \hskip -1.5cm
 \includegraphics[scale=1,page=4]{figauxreduc}
 \includegraphics[scale=1,page=3]{figauxreduc}
}
\caption{Subgraphs that cannot appear in a minimal counterexample.}
\label{fig:auxreduc}
\end{figure}

\begin{figure}[h]
\centerline{\includegraphics[scale=1]{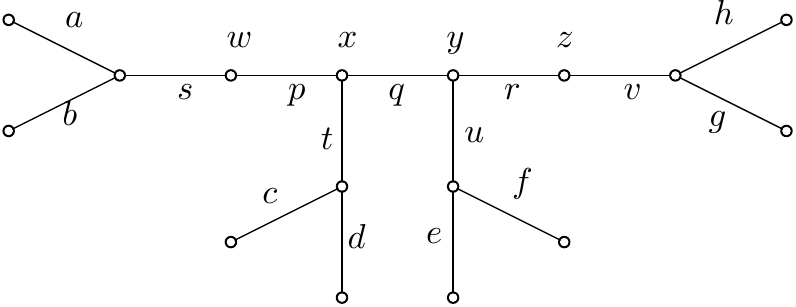}}
\caption{Illustration of the proof that minimal counterexample to Theorem~\ref{thm:cubic}
does not contain path $wxyz$ as depicted in the figure.}
\label{fig:pathreduc}
\end{figure}

Now, we distinguish several cases based on colors of $s$, $q$, and $v$. 
We again assume the colors are 1, \dots, 7; up to symmetry we have only the
following cases. 

\textbf{Case 1. } $s=1, q=2, v=3$  \\
We only need to avoid bi-colored paths 
$aspt$, $bspt$, $sptc$, $sptd$, and the four symmetrical paths
in the right part of the figure. 
If $t=1$, we choose $p$ to be different from $1, 2, a, b, c, d$.
If $t \ne 1$, it suffices to make $p$ different from $s, q, t$. 
The procedure for $r$ is analogous. 

\textbf{Case 2. } $s=1, q=1, v=2$  \\
In this case $t\ne 1$, so we only need to avoid bi-colored paths
$aspq, bspq, spqr, spqu$ and 
$urvh, urvg$, $eurv, furv$. 
If $u=2$, we make sure that $r$ differs from 
$1,2,e,f,g,h$. Otherwise, it suffices to make
$r$ different from $q, u, v$. 
Then we choose $p$ to differ from 
$a, b, 1, t, u, r$. 

\textbf{Case 3. } $s=1, q=2, v=1$  \\
This is handled in exactly the same way as Case 1.

\textbf{Case 4. } $s=1, q=1, v=1$  \\
Now $t, u \ne 1$, so we only need to avoid the paths 
$aspq, bspq, spqr, pqrv, qrvg, qrvh$, and 
$spqu, tqrv$. 
To do this, we only need to ensure, that 
$p \ne 1, a, b, t, u, r$ and 
$r \ne 1, h, g, u, t, p$, which is easily possible. 
This finishes the proof of the claim that minimal counterexample $G$ does not contain
a path $wxyz$, where $w$ and $z$ are degree~2-vertices. 

\bigskip

This finished the first part of the proof. 
Next we will use the above-derived properties of the supposed minimal 
counterexample~$G$ to find its star 7-edge-coloring and thus reach a contradiction. 
We will use only the boldface claims from the above part of the proof. 

Let $G'$ be the graph obtained from $G$ by suppressing all degree~2
vertices, i.e., replacing each path $xzy$, where $z$ is a degree~2 vertex,
by a single edge $xy$. Clearly, $G'$ is a cubic graph. It is bridgeless
(as $G$ is bridgeless) and contains no parallel edges -- as $G$~contains 
no parallel edges, no triangle with a degree~2 vertex and no 4-cycle 
with two opposite degree~2 vertices. 

By a result of Kaiser and \v{S}krekovski \cite{KS}, $G'$ contains a perfect matching~$M'$ 
such that $M'$ does not contain all edges of any minimal 3-cut or 4-cut. 
Note that each edge in $G'$ corresponds
either to a single edge in $G$ or to a path of length two. 
Let $M$ denote the set of edges of $G$ corresponding to an edge of~$M'$. 
Our goal is to use four colors (say 4, 5, 6, 7) on $M$, 
and three colors (say 1, 2, 3) on the other 
edges that form a disjoint union of circuits. 
We form an auxiliary graph~$K$, whose vertices are the edges in $M$. 
We make two of these edges $e$, $f$ adjacent in~$K$ if 
either they form a 2-edge path corresponding to an edge in~$M'$ 
or there is an edge in $G$ joining an end of $e$ with an end of~$f$. 
Observe that $K$ is a graph of maximum degree at most four. Also note that if~$K$
is disconnected, then each component contains a vertex of degree at most
three. By the Brooks Theorem, $K$ is 4-colorable unless it contains a
connected component isomorphic to $K_5$. It is easy to see that the latter
case occurs if and only if $K=K_5$ and $G=G'$.

Let us first consider the case when $K$ is 4-colorable. 
In this case we will not need the fact that $M'$ does not contain minimal 3-cuts or 4-cuts. 
The 4-coloring of the vertices of $K$ determines a 4-coloring of the edges in $M$ with the
property that every color class is an induced matching in $G$. We shall
show that we can star 3-color the edges in $G-M$ unless $G-M$ contains
a 5-cycle; this case will be treated separately. By extending that 3-edge
coloring to a 7-edge-coloring of $G$ (by using the 4-coloring of edges in
$M$) we obtain a star 7-edge-coloring since none of the four colors used on
the edges in $M$ can give rise to a bi-colored 4-path or a cycle (Lemma~\ref{thm:recursive}(a)). 
Thus it suffices to find a star 3-edge-coloring of $G-M$. This is not hard 
unless $G-M$ contains a 5-cycle. 
Recall that $G-M$ is the union of disjoint cycles and every $k$-cycle, where $k\ne 5$,
admits a star 3-edge-coloring: This is easy if $k\in\{3,4\}$ or if $k$ is
divisible by three. If $k\equiv 1\pmod{3}$ and $k>5$, we can use the colors
in the following order $1232123\cdots 123$. Similarly, if 
$k\equiv 2\pmod{3}$ and $k>5$, we can use the colors $12132123\cdots 123$. 
Thus, the only problems are the 5-cycles in $G-M$. 
To color them, we shall choose an edge $e=e_C$ in each 5-cycle $C$ and 
a color $c=c_C$, that is otherwise used as a color for $M$. 
Then we color $e$ with color $c$ and color the 4-path $C-e$ as $1,2,3,1$. 
We pick $c$ and $e$ in such a way that no edge of $M$ at distance 
at most 2 from $e$ has color $c$ (we will show below that this is possible). 
It is easy to check that this, together with the fact that $K$ is
properly colored, prevents all 4-paths and 4-cycles from being bi-colored
(Lemma~\ref{thm:recursive}(a) again). 
So, this finishes the proof of the case when $K$~is 4-colorable---provided
we show how to pick $e$ and $c$ for each 5-cycle $C$.  
To do this, we let $F$ be the set of edges of $M$ that are incident with a vertex
of $C$ but not part of $C$. 
Further, we let $X$ be the (possibly empty) set of edges of $M$ adjacent 
with some edge of $F$. Easily, $|X|$ is the number of 2-edge paths
in $M$ that are adjacent to $C$. (A 2-edge path with both ends at $C$
counts twice; in this case $X$ and $F$ intersect.)  
As $G$ contains no 3-edge path with both ends of degree 2, we have $|X| \le 2$. 
We distinguish two cases based on the color pattern on edges of $F$. 
These cases cover all possibilities up to renaming the colors. 

\textbf{Case 1.} Edges of $F$ use in some order colors 4, 4, 5, 6, and 7 
(that is, one color appears twice, the other colors once). If $X=\emptyset$, there
are three possible choices for edge $e$: for each color $c$ among 5, 6, and 7 
we may choose the edge of $C$ opposite to the edge of $F$ colored $c$. 
Edges of $X$ may be at distance 2 to some of these edges of $C$. However, 
there are at most two such edges, hence at most two colors are affected. 
So, one of colors 5, 6, and 7 is still valid. 

\textbf{Case 2.} Edges of $F$ use in some order colors 4, 4, 5, 5, and 6 
(that is, two colors twice, one once, one not at all).
In this case, if $X=\emptyset$, all five edges of $C$ can be colored 7. 
Each edge of $X$ (if such an edge exists) is at distance 2 from two edges of $C$, 
so one edge of $C$ is far from edges of $X$ and we can let this edge be $e$
and $c$ be 7. 

\bigskip

Finally, let us consider the case when $K$ does not admit a 4-coloring,
i.e., $K=K_5$. As argued before, this implies that $G=G'$ is a cubic graph
containing precisely 10 vertices. Note that $G-M$ is a 2-regular graph 
with no 3-cycles or 4-cycles (due to the choice of $M'$). 
Thus $G-M$ is isomorphic either to a 10-cycle, or to the union of two 5-cycles. 

If $G-M$ is the union of two
5-cycles, then it is easy to check that $G$ is the Petersen graph, and hence
$\chis'(G)=5$. (A star 5-edge-coloring is easy to find, and the star
4-edge-coloring does not exist as shown in part (b) below.)

The final case is that $G-M$ is a 10-cycle. 
Color its chords with colors~1, 2, 3, 4,~5. Then color the first, fourth, and seventh edges 
by colors from 1, 2, 3, 4, 5 so that no two edges at distance two share a color. 
Finally, color the remaining edges with 6~and~7.
This completes the proof.

\bigskip
(b)
Every 3-edge-coloring of a cubic graph has bi-colored cycles, thus
$\chis'(G)\ge4$. 
In Figure~\ref{fig:cube} there is a 4-edge-coloring of the cube $Q_3$.
It is easy to verify that this is indeed a star edge-coloring. 
Perhaps the fastest way to see this is to observe that for each $i\ne j$, there is
(a unique) 3-edge path colored $i,j,i$ between the two vertices colored~$j$. 
Consider now a graph $G$ that covers $Q_3$ and use the covering map to 
lift the edge-coloring of~$Q_3$ to an edge-coloring of~$G$. 
From the definition of covering projections we see that a path of length $2$ in $G$ is mapped 
to a path of length $2$ in $Q_3$. It follows that the defined edge-coloring is
proper. It also follows that a path of length $4$ in $G$ is mapped to a path
of length $4$ in $Q_3$ or to a $4$-cycle in $Q_3$, and a 4-cycle in $G$ is 
mapped to a 4-cycle in $Q_3$. It follows that we have a star edge-coloring of $G$. 

For the reverse implication, suppose that $G$ has a star 4-edge-coloring $c$.
Let us first define a (vertex) 4-coloring $f$ by letting 
$f(v)$ be the (unique) color that is missing on edges incident with $v$. 

\textbf{$f$ is a proper coloring.} For a contradiction, suppose that $f(u)=f(v)$ 
for an edge $uv$ of $G$. Let $u_1$, $u_2$ be the other neighbors of $u$, 
and $v_1$, $v_2$ be the other neighbors of $v$. 
By symmetry we may assume that $f(u)=f(v)=4$, $c(uv)=3$, 
$f(uu_i) = f(vv_i)=i$ (for $i=1,2$).
The bi-chromatic paths $u_i u v v_i$ imply that $3$ is neither used
on edges incident with $v_1$ nor on those incident with $v_2$. This, however, implies
that there is an edge-colored $2$ incident with $v_1$ and 
an edge-colored $1$ incident with $v_2$, which create a bi-chromatic
4-edge path (or 4-cycle), a contradiction. 
Note that the cases where $uv$ is contained in a triangle
($u_1=v_2$, $u_2=v_1$ or both) are also covered by the above. 

\textbf{$f$ is a covering map $G \to K_4$.} Suppose for a contradiction
that there is a vertex $v$ with neighbors $v_1$, $v_2$ such that
$f(v_1) = f(v_2)$. By symmetry we may assume that $f(v)=4$, 
$f(v_1)=f(v_2)=3$, $c(vv_i) = i$. 
Now $v_1$ must be incident with an edge of color~$2$
and $v_2$ must be incident with an edge of color~$1$, 
producing again a bi-chromatic 4-edge path (or cycle).

\textbf{$f$ together with $c$ define a covering $G \to Q_3$.} 
Let $i,j,k,l$ denote $1,2,3,4$ in some order. If a vertex $v$ of $G$
has $f(v)=i$ then the $c$-colors of its incident edges are $j$, $k$, $l$ 
and the same holds for the $f$-colors of its adjacent vertices. 
There are exactly two possibilities: either the edges incident with $v$ colored $j,k,l$ 
lead to vertices colored $k,l,j$ (respectively), or to vertices colored $l,j,k$. 
We refer to these two possibilities as the \emph{local color pattern} at $v$. 

Observe that in $Q_3$ as depicted in Figure~\ref{fig:cube}, 
there are for each $i$ two vertices colored $i$ and they use different local 
color patterns. 
This implies there is a unique vertex mapping $F: V(G) \to V(Q_3)$ such that for each 
$v \in V(G)$ the following conditions hold:
\begin{enumerate}
\setlength{\itemsep}{1pt} \setlength{\parskip}{0pt} \setlength{\parsep}{0pt}
\item we have $f(v) = f(F(v))$ (we use $f$ also for the vertex coloring
      of $Q_3$), and 
\item $v$ and $F(v)$ use the same local color pattern. 
\end{enumerate}

To show that $F$ is a covering map, we need to observe that for each $v\in V(G)$, 
the three neighbours of $v$ in $G$ map by $F$ to the three neighbours of $F(v)$ in $Q_3$. 
As we already know that $f$ is a covering map to $K_4$, it suffices to show, that
a neighbour $u$ of $v$ is indeed mapped to the neighbour of $F(v)$ with color $f(u)$
(and not to the other vertex with the same color). 
For this we observe that the local coloring pattern at a vertex $v$ determines the 
local coloring pattern at each neighbour of $v$, in any cubic graph that is 
star 4-edge-colored. As this holds both in $G$ and in $Q_3$, 
our definition of $F$ yields a covering map, which finishes the proof. 
\end{proofof}

\begin{figure}
\centerline{\includegraphics[scale=0.5]{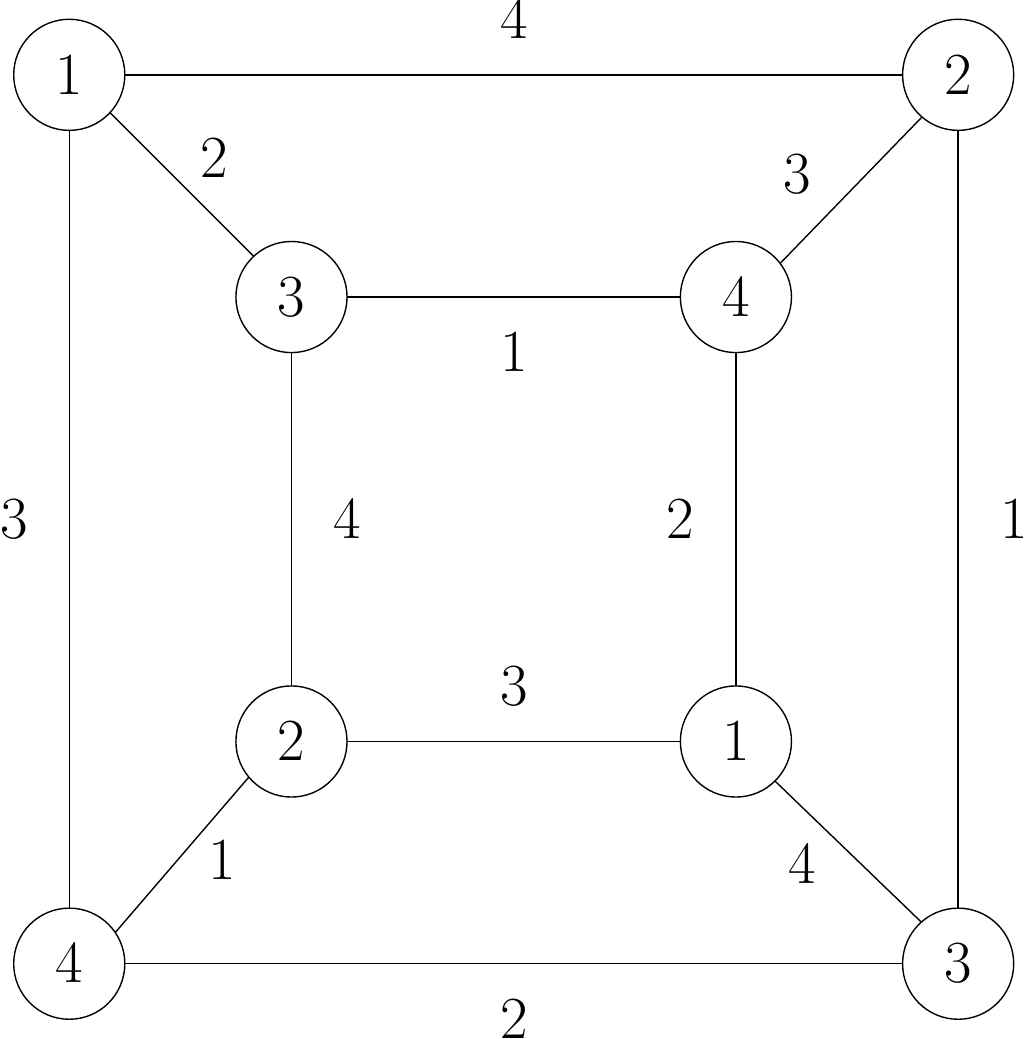}}
\caption{Cube $Q_3$ with star edge-coloring by four colors. The vertex labels are 
used in the proof of Theorem~\ref{thm:cubic}.}
\label{fig:cube}
\end{figure}

There are cubic graphs whose star chromatic index is equal to 6.
One example is $K_{3,3}$. To see this, let us suppose that we have a star
edge-coloring of $K_{3,3}$, and let $F$ be a color class. If $|F|=3$, then every other
color class contains at most one edge and hence there are at least seven
colors all together. So, we may assume that every color class contains one or two 
edges only. If $F = \{ab,cd\}$ is a color class, then one of the edges $ad$ or $cb$
forms a singleton color class since the second edge in the color class of $ad$
(and the same for $cb$) would need to be the edge of $K_{3,3}$ disjoint from
$a,b,c,d$. This implies that there are at least two singleton color classes. 
Hence, the total number of colors is at least 6. Finally, a star 6-edge-coloring
of $K_{3,3}$ is easy to construct, proving that $\chis'(K_{3,3})=6$.

\section{Open problems}

As we saw in Sections~2 and~4, establishing the star 
chromatic index is nontrivial even for complete graphs.
We established bounds 
$$
  (2+o(1))\cdot n \le \chis'(K_n) \le n \cdot \frac{ 2^{ 2\sqrt2(1+o(1)) \sqrt{\log n} } }{(\log n)^{1/4}} \,.
$$
\begin{question}
What is the true order of magnitude of $\chis'(K_n)$? 
Is $\chis'(K_n) = O(n)$? 
\end{question}

In the previous section we obtained the bound
$\chis'(G) \le 7$ for a subcubic graph $G$. 
We also saw that $\chis'(K_{3,3}) = 6$. 
A bipartite cubic graph that we thought might require seven colors is the Heawood graph
(the incidence graph of the points and lines of the Fano plane).
However, it turned out that also its star chromatic index is at most 6. 
After some additional thoughts, we propose the following.

\begin{conjecture}
If\/ $G$ is a subcubic graph, then\/ $\chis'(G)\le 6$.
\end{conjecture}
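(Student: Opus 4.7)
The plan is to follow the overall architecture of the proof of Theorem~\ref{thm:cubic}(a), extending the minimum-counterexample approach in two directions: (i) a substantially larger catalogue of reducible configurations, and (ii) a more economical decomposition of $G$ that spends only six colors in total rather than seven.

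First, I would set $G$ to be a minimum counterexample with $\chis'(G)\ge 7$ and repeat the reducibility analysis from Section~5 using Lemma~\ref{thm:recursive}, but tuned to the six-color regime. Every reducibility claim proved in the $7$-color argument must be re-proven with one fewer color; moreover, new reducible configurations need to be added. I would expect to reduce, in addition to the subgraphs in Figure~\ref{fig:auxreduc}, short cycles with specific chord/pendant patterns, longer paths of degree-$2$ vertices, and various $5$- and $6$-edge subgraphs whose removal leaves $\leq 6$ colored edges at distance~$\le 2$ from each deleted edge. The aim is to force the minimum counterexample to be locally so sparse that the matching-based coloring scheme below has enough slack.

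Second, I would again pass to the cubic graph $G'$ obtained by suppressing degree-$2$ vertices and, using the theorem of Kaiser and \v{S}krekovski \cite{KS}, choose a perfect matching $M'$ not containing all edges of any minimal $3$- or $4$-edge cut. Let $M$ be the lift of $M'$ to $G$, so that $G-M$ is a disjoint union of cycles. The budget is now tighter: I would attempt to use three colors on $M$ and three on $G-M$. For the matching, I would form the auxiliary conflict graph $K$ on $M$ as in the proof of Theorem~\ref{thm:cubic}(a) and try to properly $3$-color it; since $K$ has maximum degree at most four, this requires additional reducible configurations (analogous to how $K=K_5$ was ruled out in the $7$-color case) to exclude the subgraphs of $K$ that force chromatic number $4$. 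For the cycles, the main subtlety is again the $5$-cycles, plus now also $4$-cycles and other short cycles, which must be star $3$-colored coherently with the matching colors on their boundary.

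The hardest part will be handling the short cycles of $G-M$---especially $5$-cycles---with one fewer color than in the proof of Theorem~\ref{thm:cubic}(a). In the $7$-color argument we could afford to spend an extra matching color on a distinguished edge of each $5$-cycle; that slack is gone. I expect this to require either a local-modification argument that reroutes the matching $M$ near every problematic short cycle, or a list-coloring strengthening of Lemma~\ref{thm:recursive} that propagates constraints around the cycle more efficiently. A discharging argument on the structure of $K$ combined with the enlarged family of reducible configurations seems the most plausible route; finding the right combination so that nothing survives both reduction and decomposition is, in my view, where genuinely new ideas beyond the proof of Theorem~\ref{thm:cubic}(a) will be needed.
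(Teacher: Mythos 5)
This statement is not a theorem of the paper but an open conjecture: the paper proves only the weaker bound $\chis'(G)\le 7$ for subcubic $G$ (Theorem~\ref{thm:cubic}(a)) and explicitly poses the $6$-color statement as unresolved. Your proposal is accordingly not a proof but a research plan, and its two load-bearing steps are exactly the places where it has genuine gaps. First, the budget of three colors on the matching $M$ requires a proper $3$-coloring of the auxiliary graph $K$, which has maximum degree up to four; Brooks' theorem only gives four colors there, and you offer no argument that the configurations in $G$ forcing $\chi(K)=4$ can be catalogued and reduced --- unlike the $7$-color case, where the single exception $K=K_5$ could be identified and handled directly, a max-degree-$4$ graph can need four colors for global (odd-cycle/interference) reasons that do not translate into a bounded list of local reducible subgraphs of $G$.

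Second, the $5$-cycles of $G-M$ are not merely "the hardest part": a $5$-cycle has no star $3$-edge-coloring at all, and in the $7$-color proof this is repaired precisely by borrowing a fourth matching color on one edge of each such cycle. With a $3+3$ split that escape hatch does not exist, and your suggested remedies (rerouting $M$ near every problematic cycle, or a list version of Lemma~\ref{thm:recursive}) are named but not carried out; rerouting $M$ in particular risks destroying the cut condition from the Kaiser--\v{S}krekovski matching \cite{KS} and creating new short cycles elsewhere. Since you yourself conclude that "genuinely new ideas" are needed, the proposal should be read as a plausible but incomplete strategy for the conjecture, not as a proof; as far as the paper is concerned, the statement remains open.
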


It would be interesting to understand the list version
of star edge-coloring: by an edge $k$-list for a graph $G$ 
we mean a collection $(L_e)_{e \in E(G)}$ such that 
each $L_e$ is a set of size~$k$. We shall say that $G$~is 
\emph{$k$-star edge choosable} if for every edge $k$-list~$(L_e)$ 
there is a star edge-coloring $c$ such that
$c(e) \in L_e$ for every edge $e$. 
We let $ch'_s(G)$ be the minimum $k$ such that 
$G$ is $k$-star edge choosable.
All of the results in this paper may have extension to list colorings. 
Let us ask specifically two questions: 

\begin{question}
Is it true that $\mathrm{ch}'_s(G) \le 7$ for every subcubic graph~$G$?
(Perhaps even $\le 6$?) 
\end{question}

\begin{question}
Is it true that $\mathrm{ch}'_s(G) = \chi'_s(G)$ for every graph~$G$?
\end{question}

\section*{Acknowledgement} 

We thank to the anonymous referees for helpful comments, in particular 
for suggesting an improvement of the proof of Theorem~\ref{thm:cubic}.

\end{document}